\definecolor{shadecolor}{RGB}{241,231,64}
\newcommand{\nc}{\newcommand}
\newtheorem{thm}{Theorem}
\newtheorem*{thm*}{Theorem}
\theoremstyle{plain}
\nc{\bthm}{\begin{thm}} \nc{\ethm}{\end{thm}}
\newtheorem{prop}[thm]{Proposition}
\nc{\bprp}{\begin{prop}} \nc{\eprp}{\end{prop}}
\newtheorem{fact}[thm]{Fact}
\nc{\bfct}{\begin{fact}} \nc{\efct}{\end{fact}}
\newtheorem{prob}[thm]{Problem}
\nc{\bprb}{\begin{prob}} \nc{\eprb}{\end{prob}}
\newtheorem{lem}[thm]{Lemma}
\nc{\blem}{\begin{lem}} \nc{\elem}{\end{lem}}
\newtheorem{claim}[thm]{Claim}
\nc{\bclm}{\begin{claim}} \nc{\eclm}{\end{claim}}
\newtheorem{cor}[thm]{Corollary}
\nc{\bcor}{\begin{cor}} \nc{\ecor}{\end{cor}}
\newtheorem{conj}[thm]{Conjecture}
\nc{\bcnj}{\begin{conj}} \nc{\ecnj}{\end{conj}}
\theoremstyle{definition}
\newtheorem{defn}[thm]{Definition}
\nc{\bdfn}{\begin{defn}} \nc{\edfn}{\end{defn}}
\newtheorem{observation}[thm]{Observation}
\nc{\bobs}{\begin{observation}} \nc{\eobs}{\end{observation}}
\theoremstyle{remark}
\newtheorem{rem}[thm]{Remark}
\nc{\brem}{\begin{rem}} \nc{\erem}{\end{rem}}
\newtheorem{cnv}[thm]{Convention}
\nc{\bcnv}{\begin{cnv}} \nc{\ecnv}{\end{cnv}}
\newtheorem{exam}[thm]{Example}
\nc{\bexm}{\begin{exam}} \nc{\eexm}{\end{exam}}
\nc{\bpf}{\begin{proof}} \nc{\epf}{\end{proof}}
\nc{\be}{\begin{enumerate}}
	\nc{\ee}{\end{enumerate}}
\nc{\bi}{\begin{itemize}}
	\nc{\itm}{\item}
	\nc{\ei}{\end{itemize}}
\nc{\invlim}{\lim_{\leftarrow}}
\nc{\dirlim}{\lim_{\rightarrow}}
\nc{\mm}{\mathbf{m}}
\nc{\nn}{\mathbf{n}}
\nc{\FF}{\mathcal{F}}
\nc{\CC}{\mathcal{C}}
\nc{\Span}{\operatorname{span}}
\nc{\Img}{\operatorname{Im}}
\nc{\rank}{\operatorname{rank}}
\nc{\proj}{\operatorname{proj}}
\nc{\F}{\mathbb{F}}
\nc{\Z}{\mathbb{Z}}
\nc{\Q}{\mathbb{Q}}
\nc{\Br}{\operatorname{Br}}
\begin{document}
	\title{Kurosh Subgroup Theorem in Free Pro-$p$ Products}
	\author{Tamar Bar-On}
	\date{}
	\maketitle 
	\begin{abstract}
We provide a sufficient condition under which a closed subgroup of a restricted free pro-$p$
 product is itself a free pro-$p$ product.
	\end{abstract}
\section*{Introduction}
We recall the famous \textit{Kurosh subgroup Theorem} for subgroups of abstract free products:
\begin{thm}[{\cite[Theorem I.14]{serre1980sl}}]
	Let $G=G_1\ast G_2\ast\cdots \ast G_n$ be the abstract free product of the abstract groups $G_1,...,G_n$, and let $H$ be a subgroup of $G$. Then $$H=[\ast_{i=1}^n\ast_{\tau\in H\backslash G/G_i}(H\cap g_{i,\tau}G_ig_{i,\tau}^{-1}) ]\ast F$$ where for each $i$, $g_{i,\tau}$ runs over a system of double coset representatives for $H\backslash G/G_i$ which contains 1, and $F$ is an abstract free group.  
\end{thm}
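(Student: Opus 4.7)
The plan is to invoke Bass--Serre theory, which is the natural framework in the Serre reference cited. First I would construct the standard Bass--Serre tree $T$ attached to the free product decomposition $G=G_1\ast\cdots\ast G_n$: take a vertex $v_0$ of trivial stabilizer together with one vertex $v_i$ of stabilizer $G_i$ for each $i$, connect $v_0$ to $v_i$ by an edge of trivial stabilizer, and form the corresponding $G$-tree whose vertex set is $G/\{1\}\,\sqcup\,\bigsqcup_i G/G_i$ (more precisely, take the universal cover of the graph of groups which is a star with central vertex trivial and leaves $G_i$). Then $G$ acts on $T$ without inversions, with trivial edge stabilizers, vertex stabilizers conjugate to the $G_i$ or trivial, and quotient graph the original star.

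Next I would restrict the $G$-action to $H$ and consider the quotient graph of groups $H\backslash\!\backslash T$. By the fundamental theorem of Bass--Serre theory, $H$ is the fundamental group of this graph of groups. The vertex and edge groups are the $H$-stabilizers of chosen representatives. Since every edge stabilizer in $T$ is trivial, every edge group in $H\backslash\!\backslash T$ is trivial, so the fundamental group of this graph of groups is simply the free product of its vertex groups, amalgamated with the free group $F=\pi_1$ of the underlying quotient graph (equivalently, obtained by contracting a spanning tree and freely adjoining one generator per remaining edge).

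Then I would identify the vertex groups. The $H$-orbits of vertices of type $G_i$ in $T$ correspond canonically to the double cosets in $H\backslash G/G_i$: the $G$-vertex $gG_i$ and $g'G_i$ lie in the same $H$-orbit iff $HgG_i=Hg'G_i$. Picking representatives $g_{i,\tau}$ for these double cosets, the stabilizer in $H$ of the vertex $g_{i,\tau}G_i$ is exactly $H\cap g_{i,\tau}G_ig_{i,\tau}^{-1}$. The orbits of the trivial-stabilizer vertex $v_0$ contribute nothing to the free product, and absorb into the free part $F$. Assembling everything yields the claimed decomposition.

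The main obstacle, in my view, is bookkeeping rather than conceptual: one must set up the Bass--Serre tree so that the natural bijection between $H$-orbits of type-$i$ vertices and $H\backslash G/G_i$ is transparent, verify that choosing $1$ as a representative in each relevant double coset is compatible with choosing a spanning tree of $H\backslash\!\backslash T$ (so that the vertex groups appear as stated without extra conjugation), and confirm that the contribution of the trivial-stabilizer vertices and all the remaining edges indeed collapses into a single free factor $F$. Once these combinatorial choices are made carefully, the statement is an immediate application of the structure theorem for groups acting on trees with trivial edge stabilizers.
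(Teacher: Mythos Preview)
Your proposal is correct and is precisely the Bass--Serre theoretic argument given in the cited source \cite{serre1980sl}; the present paper does not supply its own proof of this classical theorem but merely quotes it as background. There is nothing to compare: the paper defers entirely to Serre, and your outline is Serre's proof.
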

The Kurosh subgroup Theorem admits a weaker version for pro-$\mathcal{C}$ groups and free pro-$\mathcal{C}$ products. More precisely:
\begin{thm}[{\cite[Theorem 9.1.9]{ribes2000profinite}}]
	Let $\mathcal{C}$ be an extension closed variety of finite groups, $G=G_1\amalg^{\mathcal{C}} G_2\amalg^{\mathcal{C}}\cdots \amalg^{\mathcal{C}} G_n$ be the free pro-$\mathcal{C}$ product of the pro-$\mathcal{C}$ groups $G_1,...,G_n$, and let $U$ be an \textbf{open} subgroup of $G$. Then $$U=\left[\coprod_{i\in\{1,...,n\}}^{\mathcal{C}}\coprod_{\tau\in H\backslash G/G_i}^{\mathcal{C}}(H\cap g_{i,\tau}G_ig_{i,\tau}^{-1}) \right]\amalg^{\mathcal{C}} F$$ where for each $i$, $g_{i,\tau}$ runs over a system of double coset representatives for $U\backslash G/G_i$ which contains 1, and $F$ is a free pro-$\mathcal{C}$ group.  
\end{thm}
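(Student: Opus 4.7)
The plan is to verify the claim by reduction to the classical abstract Kurosh theorem and then to upgrade the resulting abstract free decomposition to a free pro-$\mathcal{C}$ decomposition by verifying the universal property.

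First I would set up the required combinatorial data. Fix a system of double coset representatives $\{g_{i,\tau}\}_{\tau\in U\backslash G/G_i}$ for each $i$, with $1$ chosen as the representative of the trivial double coset, and set $H_{i,\tau}=U\cap g_{i,\tau}G_ig_{i,\tau}^{-1}$. These are closed subgroups of $U$, each a pro-$\mathcal{C}$ group, and since $U$ is open we have $[G:U]<\infty$, so only finitely many double cosets, and hence only finitely many subgroups $H_{i,\tau}$, appear.

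Second, I would pass to the dense abstract free product $G^{\mathrm{abs}}=G_1\ast\cdots\ast G_n\hookrightarrow G$ and consider the finite-index abstract subgroup $U^{\mathrm{abs}}=G^{\mathrm{abs}}\cap U$. Applying the classical Kurosh theorem to $U^{\mathrm{abs}}$ gives an abstract free decomposition whose conjugation factors are $U^{\mathrm{abs}}\cap g_{i,\tau}G_ig_{i,\tau}^{-1}$ (dense in $H_{i,\tau}$) together with a finitely generated abstract free complement $F^{\mathrm{abs}}$ whose generators come from a Schreier transversal for $U^{\mathrm{abs}}\leq G^{\mathrm{abs}}$. Taking closures inside $U$ and letting $F$ be the free pro-$\mathcal{C}$ group on this Schreier set $X$ yields a natural continuous surjection $\Psi:\bigl[\coprod_{i,\tau}^{\mathcal{C}}H_{i,\tau}\bigr]\amalg^{\mathcal{C}}F\twoheadrightarrow U$.

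Third, and this is the heart of the argument, I would verify injectivity of $\Psi$ by checking the universal property of the free pro-$\mathcal{C}$ product on $U$. Given a pro-$\mathcal{C}$ group $K$ and continuous homomorphisms $\phi_{i,\tau}:H_{i,\tau}\to K$ together with $\psi:F\to K$, I need to construct a continuous extension $\Phi:U\to K$. The idea is the permutational wreath product construction: for each $i$, assemble the family $\{\phi_{i,\tau}\}_\tau$ and $\psi$ into a single continuous homomorphism $\tilde{\phi}_i:G_i\to K\wr_{G/U}(G/U):=K^{G/U}\rtimes(G/U)$ by defining the coordinate functions via the double coset data and the Schreier transversal. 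By the universal property of $G=G_1\amalg^{\mathcal{C}}\cdots\amalg^{\mathcal{C}}G_n$, the $\tilde{\phi}_i$ extend to a single continuous $\tilde{\Phi}:G\to K\wr(G/U)$; restricting to $U$ lands in $K^{G/U}$, and evaluation at the coordinate corresponding to the coset $U$ itself yields $\Phi$.

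The main obstacle will be this third step. The extension-closedness hypothesis on $\mathcal{C}$ is needed precisely here, to guarantee that $K\wr(G/U)$ is itself pro-$\mathcal{C}$ so that the universal property of $G$ can be invoked. The technical core is checking that the prescribed coordinate functions transform correctly under the transitive $G$-action on $G/U$, so that each $\tilde{\phi}_i$ is genuinely a homomorphism; this is essentially the pro-$\mathcal{C}$ analogue of the Kaloujnine--Krasner embedding theorem, and once established, it forces $\Psi$ to be injective and hence an isomorphism.
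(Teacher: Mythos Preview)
The paper does not contain a proof of this theorem. It is quoted in the introduction as a known result, \cite[Theorem~9.1.9]{ribes2000profinite}, and serves only as background motivation for the paper's own Main Theorem about closed (not necessarily open) subgroups of restricted free pro-$p$ products. There is therefore nothing in the paper to compare your proposal against.

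As an aside, your wreath-product strategy is essentially the Binz--Neukirch--Wenzel approach and is sound in outline, but two technical points need tightening. First, the target of $\tilde{\Phi}$ should be $K^{G/U}\rtimes\operatorname{Sym}(G/U)$ (or the image of $G$ therein), since $G/U$ is merely a finite $G$-set and not a group when $U$ is not normal; consequently the claim that $\tilde{\Phi}|_U$ lands in $K^{G/U}$ is not literally correct, because $U$ stabilises the trivial coset but may permute the others. What is true is that $\tilde{\Phi}(U)\subseteq K^{G/U}\rtimes\operatorname{Stab}(U)$, and evaluation at the $U$-coordinate is a well-defined homomorphism on that subgroup, which is what you actually need. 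Second, Step~2 is largely superfluous: once the wreath-product embedding is set up correctly it yields the universal property of $U$ directly, so the detour through the abstract Kurosh theorem (and the attendant bookkeeping of matching abstract versus profinite double-coset representatives and showing density of $U^{\mathrm{abs}}\cap g_{i,\tau}G_ig_{i,\tau}^{-1}$ in $H_{i,\tau}$) can be dispensed with entirely.
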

Recall that an extension-closed variety of finite groups is a class of finite groups which is closed under taking subgroups, quotients and extensions.

The above version was extended to \textit{restricted} free pro-$\mathcal{C}$ products (to be defined later) of infinitely many groups. More precisely:
\begin{thm}[{\cite[Corollary 7.3.2]{ribes2017profinite}}]\label{open subgroup of restricted}
	Let $\mathcal{C}$ be an extension-closed variety of finite groups, $G=\coprod_{i\in I}^{r,\mathcal{C}} G_i$ be the restricted free pro-$\mathcal{C}$ product of the pro-$\mathcal{C}$ groups $\{G_i, i\in I\}$, and let $U$ be an \textbf{open} subgroup of $G$. Then there exist sets $D_i$ of double coset representatives for $U\backslash G/G_i$ which contain 1, such that the family of subgroups $$\{U\cap xG_ix^{-1}|i\in I,x\in D_i\}$$ converges to 1, and $U$ is the restricted free pro-$\mathcal{C}$ product $$U=\left[\coprod_{i\in I,x\in D_i}^{r,\mathcal{C}}(U\cap xG_ix^{-1}) \right]\amalg^{\mathcal{C}} F$$ where $F$ is a free pro-$\mathcal{C}$ group.  
\end{thm}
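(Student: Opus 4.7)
The plan is to apply profinite Bass--Serre theory for actions on $\CC$-trees (as developed in Chapter 6 of the Ribes 2017 reference) to the canonical standard $\CC$-tree $S$ associated with the restricted free pro-$\CC$ product $G = \coprod_{i\in I}^{r,\CC} G_i$. On $S$, the group $G$ acts with trivial edge stabilizers, vertex stabilizers that are either trivial or conjugates of the factors $G_i$, and with quotient graph $G\backslash S$ a profinite ``bouquet'' having one edge orbit per $i \in I$ incident to a common base vertex.

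I would begin by restricting the action to $U$ and studying the resulting profinite quotient graph $U\backslash S$. Because $[G:U]<\infty$, the natural map $U\backslash S \to G\backslash S$ has fibers of size at most $[G:U]$, so $U\backslash S$ inherits a profinite graph structure; its vertices with non-trivial stabilizer are in bijection with pairs $(i, x)$ where $i \in I$ and $x$ runs over a set $D_i\ni 1$ of representatives for the (finite) double coset space $U\backslash G/G_i$, each with stabilizer $U\cap xG_ix^{-1}$, while all edge stabilizers remain trivial. The profinite Bass--Serre fundamental-group theorem then presents $U$ as the fundamental pro-$\CC$ group of this graph of pro-$\CC$ groups, which---since edge stabilizers are trivial---is a free pro-$\CC$ product of the vertex groups amalgamated with a free pro-$\CC$ group $F$ corresponding to the cycles in $U\backslash S$. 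The final step is to verify that the family $\{U\cap xG_ix^{-1}\}$ converges to $1$, which upgrades the decomposition to a genuine \emph{restricted} free pro-$\CC$ product: for any open normal $N\trianglelefteq G$ with $N\le U$, the convergence $\{G_i\}\to 1$ supplies a cofinite $I_0\subseteq I$ with $G_i\subseteq N$ for $i\in I_0$, and since $|D_i|\le [G:U]$ is uniformly bounded, only finitely many intersections $U\cap xG_ix^{-1}$ fail to lie in $N$.

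The main obstacle I anticipate is the correct setup of the profinite graph $U\backslash S$ and the proper application of the Bass--Serre fundamental-group theorem in this infinite-factor restricted setting: the quotient graph is a genuine profinite graph with infinitely many vertices, and one must check that the fundamental-group construction yields a restricted (rather than merely an abstract or unrestricted) free pro-$\CC$ product. The convergence argument above is the key technical link between the tree-theoretic output and the restricted-product formalism; once this is in place, the shape of the decomposition and the identification of the vertex stabilizers follow essentially formally from the standard tree machinery.
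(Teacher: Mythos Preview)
The paper does not supply its own proof of this theorem: it is quoted verbatim as \cite[Corollary 7.3.2]{ribes2017profinite} and used later as a black box (see the paragraph preceding Corollary~\ref{free product for every coseds}). Your outline via the standard $\CC$-tree and profinite Bass--Serre theory is exactly the machinery of Chapters~6--7 of \cite{ribes2017profinite} from which that corollary is derived, so your approach coincides with the cited source rather than with anything in the present paper.

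One small comment on the sketch itself: your convergence argument is fine, but note that it does not actually need the uniform bound $|D_i|\le [G:U]$ in the way you phrase it---once $G_i\subseteq N$ for a normal open $N\le U$, \emph{every} conjugate $xG_ix^{-1}$ already lies in $N$, so all of $U\cap xG_ix^{-1}$ for $x\in D_i$ are contained in $N$ regardless of how many there are; the finiteness of each $D_i$ is only needed to ensure the total exceptional set (over the finitely many bad $i$) is finite. It is also worth remarking that the present paper deliberately avoids tree-theoretic arguments for its own Main Theorem (see the closing paragraph of the Introduction), so while your approach is the correct and standard one for the cited result, it is methodologically orthogonal to the paper's contribution.
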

Unfortunately, the general version of the Kurosh subgroup Theorem does not hold for free pro-$\mathcal{C}$ products. In \cite[Section 10.7]{ribes2017profinite} the author gave an example of two pro-$p$ groups $G_1, G_2$ and a closed subgroup $H$ of the free pro-$p$ product $G_1\coprod^p G_2$, such that $H$ does not admit a decomposition as a free pro-$p$ product of a free pro-$p$ group and the intersections
of $H$ with conjugates of  $G_1$ and $G_2$.

However, some special cases are known. In \cite[Theorem 9.6.2]{ribes2017profinite} it has been proven that if $\mathcal{C}$ is an extension-closed variety of finite groups which contains $C_p$ for some prime $p$, and $H$ is a second-countable pro-$p$ subgroup of a free pro-$\mathcal{C}$ product $\coprod_T^{\mathcal{C}}G_t$ over a profinite space $T$ (to be defined later), then $H$ admits a decomposition as a free pro-$p$ product of a free pro-$p$ group and intersections
of $H$ with conjugates of the subgroups $G_t$, for all $t\in T$. Over the years, significant effort has been devoted to the study of closed subgroups of free pro-$\mathcal{C}$ products, and mainly for those satisfying the Kurosh Subgroup Theorem, as can be shown, for example, in \cite{haran1987closed,herfort1987solvable,herfort1987subgroups} and \cite{jarden1994prosolvable}.  

The object of this paper is to prove the following:
\begin{thm*}[Main Theorem]
	Let $G=\coprod_{i\in I}^{r,p}G_i$ be a restricted free pro-$p$ product and $H\leq G$ a closed subgroup. Assume that:
	\begin{enumerate}
		\item The space $H\backslash G/G_i$ is finite for every $i\in I$, and
		\item There exist sets $D_i$ of double coset representatives for $H\backslash G/G_i$, such that $H$ is generated by the subgroups $\{H\cap xG_ix^{-1}:i\in I,x\in D_i\}$.
	\end{enumerate} 
Then the set $\{H\cap xG_ix^{-1}:i\in I,x\in D_i\}$ of subgroups of $H$ converges to 1, and  $H=\coprod_{i\in I,x\in D_i}^{r,p}(H\cap xG_ix^{-1})$.
	\end{thm*}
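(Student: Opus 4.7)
The plan is to build a natural continuous homomorphism $\phi\colon\tilde H\to H$ from the candidate restricted free pro-$p$ product $\tilde H:=\coprod_{i\in I,\,x\in D_i}^{r,p}(H\cap xG_ix^{-1})$, and to show $\phi$ is an isomorphism. I proceed in three stages: (a) verify convergence so that $\tilde H$ is defined, (b) derive surjectivity of $\phi$ from condition~(2), and (c) establish injectivity by approximating $H$ from above by open overgroups in $G$ and leveraging Theorem~\ref{open subgroup of restricted}.

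For (a) and (b), given an open normal $N\trianglelefteq H$, choose an open normal $M\trianglelefteq G$ with $M\cap H\subseteq N$. Because $\{G_i\}$ converges to $1$ in $G$, the set $I_0:=\{i\in I:G_i\not\subseteq M\}$ is finite; for $i\notin I_0$ the normality of $M$ propagates $xG_ix^{-1}\subseteq M$, forcing $H\cap xG_ix^{-1}\subseteq N$ for every $x\in D_i$; for $i\in I_0$ assumption~(1) makes $D_i$ finite, leaving only finitely many violating pairs. So $\tilde H$ is well-defined, and the inclusions $H\cap xG_ix^{-1}\hookrightarrow H$ yield $\phi$ via the universal property. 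The image of $\phi$ contains the generators furnished by~(2) and is closed (a continuous image of a compact group in a Hausdorff group), so equals $H$.

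For (c), fix an open normal $N\trianglelefteq\tilde H$ and let $S\subseteq\bigsqcup_i(\{i\}\times D_i)$ be the finite set of pairs with $H\cap xG_ix^{-1}\not\subseteq N$. It suffices to prove $\ker\phi\subseteq N$, for which I would produce a continuous homomorphism $\psi\colon H\to Q:=\tilde H/N$ whose restriction to each $H\cap xG_ix^{-1}\subseteq H$ coincides with the composite $H\cap xG_ix^{-1}\hookrightarrow\tilde H\twoheadrightarrow Q$: condition~(2) then forces $\psi\circ\phi$ to equal $\tilde H\twoheadrightarrow Q$, giving $\ker\phi\subseteq N$. To construct $\psi$, pick an open normal $M\trianglelefteq G$ small enough that $G_i\subseteq M$ for every $i$ not appearing in $S$ and that, for each $(i,x)\in S$, the subgroup $M\cap(H\cap xG_ix^{-1})$ lies in the kernel of the prescribed map $\alpha_{i,x}\colon H\cap xG_ix^{-1}\to Q$. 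Set $U:=HM$, an open subgroup of $G$, and apply Theorem~\ref{open subgroup of restricted} to obtain $U=\bigl[\coprod_{(i,y)\in\Lambda}^{r,p}(U\cap yG_iy^{-1})\bigr]\amalg^p F$, where the representatives $\Lambda\subseteq\bigsqcup_i(\{i\}\times D_i)$ are chosen by selecting one $y\in D_i$ per $U$-double coset, using~(1) to partition each $U$-orbit into finitely many $H$-orbits.

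The main obstacle---and the step I expect to require the most care---is actually defining $\psi$ coherently on $U$ from the prescribed data on $H\cap xG_ix^{-1}\subseteq U$. Condition~(2) is what handles the free factor $F$: it places $H$ inside the non-free summand of $U$, so that $U=\coprod^{r,p}(U\cap yG_iy^{-1})\cdot M$, and a basis change on $F$ (replacing each generator $t$ by $a_t^{-1}t\in M$ via a decomposition $t=a_tm_t$) lets us assume $F\subseteq M$, after which $F$ is automatically killed in any quotient of $U$ factoring through $U/M\cong H/(H\cap M)$. The genuinely delicate point is reconciling the prescribed $\alpha_{i,x}$ for the several $x\in D_i$ lying in a single $U$-double coset represented by a chosen $y\in\Lambda$: the corresponding subgroups $H\cap xG_ix^{-1}$ sit in distinct $U$-conjugates of $U\cap yG_iy^{-1}$, and matching them coherently to define the component of $\psi$ on each $U\cap yG_iy^{-1}$ requires controlling the conjugating elements $u_x\in U$ modulo $M$. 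Shrinking $M$ further (using finiteness of $S$ from~(1) to keep the smallness requirements simultaneously achievable) should absorb these discrepancies, and passing to the inverse limit over $M$ then produces the required $\psi$ and completes the injectivity argument.
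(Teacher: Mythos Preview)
Your strategy is essentially the paper's: approximate $H$ from above by an open subgroup of $G$, invoke the Kurosh theorem for open subgroups (Theorem~\ref{open subgroup of restricted}), build the desired map on the open overgroup via its free-product decomposition, and restrict to $H$. Parts (a) and (b) are fine and match the paper. In part (c), however, your write-up misidentifies where the real work lies and leaves a genuine step unaddressed.

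The issue you call ``genuinely delicate'' --- several $x\in D_i$ falling into a single $U$-double coset --- is in fact the easy one. Since $x\notin HyG_i$ for $x\ne y$ in $D_i$, there is an open $U_{i,x,y}\supseteq H$ with $x\notin U_{i,x,y}yG_i$; intersecting over the finitely many relevant pairs and shrinking $M$ so that $HM$ lies inside this intersection forces $D_i$ (for $i$ in your finite set) to remain a full set of $U$-double coset representatives. The paper does exactly this, so no ``matching of conjugating elements'' is needed.

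The step you \emph{do not} mention is how to extend each prescribed $\alpha_{i,x}\colon H\cap xG_ix^{-1}\to Q$ to the strictly larger free factor $U\cap xG_ix^{-1}$. A homomorphism from a closed subgroup of a pro-$p$ group to a finite $p$-group need not extend, so this is not automatic. The paper circumvents the problem by first extending $\varphi$ to $\varphi'\colon HO\to A$ (sending $O$ to $1$), so that a map on $K\cap xG_ix^{-1}$ is already given by restriction of $\varphi'$; it then shrinks the overgroup $K$ further so that $K\cap xG_ix^{-1}\subseteq (H\cap xG_ix^{-1})O$, guaranteeing $\varphi'(K\cap xG_ix^{-1})=\varphi(H\cap xG_ix^{-1})$. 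In your language, you must shrink $M$ once more so that $U\cap xG_ix^{-1}\subseteq (H\cap xG_ix^{-1})\cdot M_0$, where $M_0$ is the \emph{original} normal subgroup with $M_0\cap(H\cap xG_ix^{-1})\subseteq\ker\alpha_{i,x}$; then $\alpha_{i,x}$ extends by killing $M_0$. Without this, your construction of the factor maps is incomplete.

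Finally, the basis-change detour for $F$ is unnecessary (and the claim that condition~(2) ``places $H$ inside the non-free summand'' is false as stated). Once the double cosets are separated, each $H\cap xG_ix^{-1}$ lies inside a single non-$F$ free factor of $U$, so $\psi|_{H\cap xG_ix^{-1}}$ is determined solely by the component of $\psi$ on that factor; the value of $\psi|_F$ is irrelevant and may be taken trivial. The paper instead uses projectivity of $F$ to lift $\varphi'|_F$, which is equally painless.
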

It is worth mentioning that the proof given in this paper is purely group-theoretic and elementary, while the proof of \cite[Theorem 9.6.2]{ribes2017profinite} relies on the theory of profinite groups acting on profinite trees.

\section*{Main results}
Although we focus on restricted free pro-$p$ products in this paper, some of the results can be done generally for free pro-$\mathcal{C}$ products of families of subgroups continuously indexed by profinite spaces. Hence, we start this section by recalling the notation of a free pro-$\mathcal{C}$ product of a family of subgroups continuously indexed by a profinite space. First, we present the internal point of view. For an extensive discussion on free pro-$\mathcal{C}$ products, from external and internal points of view, the reader is referred to \cite[Chapter 5]{ribes2017profinite}.

Throughout this paper, $\mathcal{C}$ is always assumed to be an extension-closed variety of finite groups.

Let $T$ be a profinite space, $G$ a pro-$\mathcal{C}$ group, and let $\{G_t:t\in T\}$ be a family of closed subgroups of $G$ indexed by $T$. We say that the family $\{G_t:t\in T\}$ is \textit{continuously} indexed by $T$ if for every open subgroup $U\leq_o G$, the set $\{t\in T:G_t\leq U\}$ is open in $T$.
\begin{rem}[{\cite[Lemma 5.2.1]{ribes2017profinite}}]\label{continuously indexed}
	The property of $\{G_t:t\in T\}$ being continuously indexed by $T$ is equivalent to the subspace $\mathcal{G}=\{(g,t):g\in G_t\}$ being closed in $G\times T$.
\end{rem}
$G$ is said to be the free pro-$\mathcal{C}$ product of $\{G_t:t\in T\}$, and we denote $G=\coprod_T^{\mathcal{C}}G_t$, if the following properties are satisfied:
\begin{enumerate}
	\item $\{G_t:t\in T\}$ is continuously indexed by $T$.
	\item For every $t\ne s\in T$, $G_t\cap G_s=\{e\}$.
	\item $G=\overline{\langle G_t:t\in T\rangle }$.
	\item Let $D=\bigcup_TG_t$. Then every continuous function $f:D\to H$, where $H$ is a pro-$\mathcal{C}$ group, such that $f|_{G_t}$ is a homomorphism for every $t\in T$, can be lifted (uniquely) to a continuous homomorphism $G\to H$.
\end{enumerate}
When $\mathcal{C}$ is the variety of all finite $p$-groups we write $G=\coprod_T^{p}G_t$ instead of $G=\coprod_T^{\mathcal{C}}G_t$.

In case $T=I\cup \{*\}$ is the one-point compactification of the discrete space $I$, and $G_{*}=\{e\}$, then $G$ is called a \textit{restricted} free pro-$\mathcal{C}$ product of $\{G_i\}_{i\in I}$, and is denoted by $G=\coprod_T^{r,\mathcal{C}}G_t$. In that case the property of $\{G_t:t\in T\}$ being continuously indexed by $T$ is equivalent to the set $\{G_i:i\in I\}$ being \textit{converging to 1}, i.e, to the property that every open subgroup $U\leq_o G$ contains all but finitely many $G_i$'s. In addition, a map $\varphi:D\to H$ which is a continuous homomorphism on every $G_t$ is continuous on $D$ if and only if it converges to 1, meaning that every open subgroup $U\leq_oH$ contains all but finitely many $\varphi(G_i)$'s.

We briefly recall the notation of an \textit{external} free pro-$\mathcal{C}$ product. Let ${\pi:\mathcal{G}\to T}$ be a quotient map between two profinite spaces such that for every $t\in T$, $\pi^{-1}(t)$ is a pro-$\mathcal{C}$ group with respect to the induced topology. Assume further that the map $(x,y)\to xy^{-1}$ is continuous as a map from $\mathcal{G}^2=\{(x,y):\pi(x)=\pi(y)\}$ to $\mathcal{G}$ where the former space is equipped with the induced product topology. Then $(\mathcal{G},T,\pi)$ is called a \textit{sheaf} of pro-$\mathcal{C}$ groups. The free pro-$\mathcal{C}$ product over $(\mathcal{G},T,\pi)$, denoted by $\coprod^{\mathcal{C}}_T\mathcal{G}$, is a pro-$\mathcal{C}$ group $G$, equipped with a continuous map $\varphi:\mathcal{G}\to G$ which is a homomorphism on every fiber $\pi^{-1}(t)$, which satisfies the following universal property: for every continuous map into a pro-$\mathcal{C}$ group $f:\mathcal{G}\to H$, which is a homomorphism on each fiber, there exists a unique continuous homomorphism $\tilde{f}:G\to H$ such that $\tilde{f}\circ \varphi=f$.

By \cite[Section 5.3]{ribes2017profinite}, every external free pro-$\mathcal{C}$ product $\coprod_T^{\mathcal{C}}\mathcal{G}$ is an internal free pro-$\mathcal{C}$ product of its family of subgroups $\varphi(\pi^{-1}(t))$, and conversely, every internal free pro-$\mathcal{C}$ product of a family of subgroups $\{G_t:t\in T\}$ is an external free pro-$\mathcal{C}$ product of the sheaf $(\mathcal{G},T,\pi)$, that was defined in Remark \ref{continuously indexed}, where $\pi(g,t)=t$.

Our first tool in proving the main theorem is the following condition for being a free pro-$\mathcal{C}$ product, which becomes an equivalent condition for restricted free pro-$\mathcal{C}$ products. This is a general version of the characterization of free pro-$\mathcal{C}$ products of finitely many groups that was presented in \cite{haran2000free}.

First, we need the following notations: Let $G$ be a pro-$\mathcal{C}$ group and ${\{G_t:t\in T\}}$ a family of closed subgroups of $G$. An embedding problem for ${(G,\{G_t:t\in T\})}$ in the category of pro-$\mathcal{C}$ groups is a tuple 
\begin{equation}\label{embedding problem}
	\begin{gathered}
	 (\varphi:G\to A,\alpha:B\to A,\{B_t:t\in T\})
	\end{gathered}
\end{equation}
consisting of two continuous homomorphisms and a set of subgroups of $B$, such that $B,A$ are pro-$\mathcal{C}$ groups, $\alpha$ is an epimorphism, and for every $t\in T$,  $\alpha$ maps $B_t$ isomorphically onto $\varphi(G_t)$. The embedding problem (\ref{embedding problem}) is called \textit{finite} if $B$ is finite. The embedding problem (\ref{embedding problem}) is called \textit{solvable} if there exists a homomorphism $\psi:G\to B$ such that $\alpha\circ\psi=\varphi$, and for every $t\in T$, $\psi(G_t)=B_t$.  
\begin{rem}
	One should not be confused with the definition of a solvable embedding problem relative to a family of closed subgroups which appears in the context of relatively projective groups and has a different meaning (see, for example, \cite{haran2005p,koenigsmann2002relatively}).
\end{rem}
\begin{prop}\label{condition for being free product}
	Let $G$ be a pro-$\mathcal{C}$ group and $\{G_t:t\in T\}$ a family of closed subgroups continuously indexed by $T$ such that $G=\overline{\langle G_t:t\in T\rangle }$. Then $G$ is a free pro-$\mathcal{C}$ product of $\{G_t:t\in T\}$ if every finite embedding problem for $(G,\{G_t:t\in T\})$ in the category of pro-$\mathcal{C}$ groups is solvable.
\end{prop}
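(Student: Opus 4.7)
My plan is to verify, under the embedding-problem hypothesis, the two non-automatic parts of the definition of a free pro-$\mathcal{C}$ product of $\{G_t : t \in T\}$: the universal extension property for continuous $f : D \to H$, and the trivial pairwise intersection $G_t \cap G_s = \{e\}$ for $t \neq s$. Uniqueness of any extension is automatic from $G = \overline{\langle G_t\rangle}$.

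For the universal property I first reduce to $H$ finite: write $H = \varprojlim_i H/N_i$ with $H/N_i \in \mathcal{C}$ and assemble compatible extensions $\tilde f_i : G \to H/N_i$, compatibility coming from uniqueness on the dense set $\bigcup_t G_t$. For finite $H$, regard $f$ as a continuous map on the sheaf $\mathcal{G}$. The open set $f^{-1}(e) \subseteq \mathcal{G}$ contains each $(e,t)$, so for each $t$ I pick a basic open $(N_t \times V_t) \cap \mathcal{G} \subseteq f^{-1}(e)$ with $N_t \trianglelefteq_o G$ and $V_t$ an open neighborhood of $t$; extracting a finite subcover of the compact space $T$ and setting $N := \bigcap_j N_{t_j}$ yields $G_t \cap N \leq \ker f|_{G_t}$ uniformly in $t$. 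Hence each $f|_{G_t}$ descends to $\tilde f_t : A_t \to H$, where $A_t := G_t N/N \leq A := G/N$ and $\varphi : G \to A$ is the quotient. Let $B \leq A \times H$ be the subgroup generated by the graphs $B_t := \{(a, \tilde f_t(a)) : a \in A_t\}$; the projection $\alpha : B \to A$ is surjective (since $\bigcup_t A_t$ generates $A$) and restricts to isomorphisms $B_t \to A_t = \varphi(G_t)$, so $(\varphi, \alpha, \{B_t\})$ is a finite embedding problem in $\mathcal{C}$. A solution $\psi$ yields $\tilde f := \pi_H \circ \psi$, since for $g \in G_t$, $\psi(g) \in B_t$ has first coordinate $\varphi(g) = gN$, forcing $\pi_H(\psi(g)) = \tilde f_t(gN) = f(g,t)$.

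For the trivial-intersection property, suppose to the contrary that $e \neq g \in G_t \cap G_s$ with $t \neq s$. Pick $N \trianglelefteq_o G$ with $g \notin N$; write $A := G/N$, $A_u := G_u N/N$. Form $P := A_t \amalg^{\mathcal{C}} A_s$, let $K \trianglelefteq_o P$ be the kernel of the canonical $\pi_P : P \to \langle A_t, A_s\rangle \leq A$, and $K' \trianglelefteq_o P$ the kernel of the retraction map $r : P \to A_t \times A_s$ coming from the two retractions that trivialize one factor each. Set $M := K \cap K'$ and $Q := P/M$, a finite group in $\mathcal{C}$. Using $M \subseteq K'$ and the computation $A_t M \cap K' = M$ (and symmetrically for $s$), $A_t$ and $A_s$ embed faithfully in $Q$ with trivial intersection; $M \subseteq K$ makes $\pi_P$ factor through $Q$; and the two images of $gN$ remain distinct in $Q$ because their difference lies in $K \setminus K' \supseteq K \setminus M$. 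Now take $B$ to be the subgroup of $Q \times A$ generated by $B_t := \{(\bar a, a) : a \in A_t\}$, $B_s := \{(\bar a, a) : a \in A_s\}$ (where $\bar a$ denotes the image of $a$ in $Q$), and $B_u := \{e\} \times A_u$ for $u \notin \{t, s\}$. Then $(\varphi, \mathrm{pr}_A|_B, \{B_u\})$ is a finite embedding problem with $B_t \cap B_s = \{(e,e)\}$; any solution $\psi$ forces $\psi(g) \in B_t \cap B_s = \{(e,e)\}$, hence $\varphi(g) = e$, contradicting $g \notin N$.

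The main obstacle I anticipate is the compactness-uniformity step in the universal-property argument: producing a single open normal $N \trianglelefteq_o G$ controlling $f|_{G_t}$ \emph{uniformly} in $t$, so that the infinite family $\{G_t\}$ collapses to the finite collection $\{A_t\}$ inside the single finite quotient $A = G/N$. Without this uniformity the finite-target embedding-problem hypothesis cannot be brought to bear on an infinitely-indexed family. By comparison, the residual-finiteness construction of $Q$ in the trivial-intersection argument is a routine verification.
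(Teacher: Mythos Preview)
Your proof is correct and takes a genuinely different route from the paper's.

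The paper argues as follows: it forms the external free pro-$\mathcal{C}$ product $H=\coprod_T^{\mathcal{C}}\mathcal{G}$ over the sheaf $(\mathcal{G},T,\pi)$, together with the canonical epimorphism $\varphi:H\to G$ sending each $H_t$ isomorphically onto $G_t$. It then proves, as a stand-alone lemma, that under the embedding-problem hypothesis any such $\varphi$ admits a continuous group-theoretic section $\psi:G\to H$ with $\psi(G_t)=H_t$; since $H$ is generated by the $H_t$'s, this $\psi$ is an isomorphism. The section is produced by a Zorn's-lemma argument: one takes a minimal closed normal $N\leq\ker\varphi$ for which a compatible section $G\to H/N$ exists, and if $N\neq\{e\}$ one manufactures a single finite embedding problem (via a suitable open normal $U\unlhd_o H$ and a fiber-product identification $H/M\cong H/MU\times_{H/NU}H/N$) whose solution pushes the section down to a strictly smaller $M$, contradicting minimality.

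By contrast, you verify the axioms of an internal free product directly, constructing two separate families of finite embedding problems: graphs inside $A\times H$ to obtain the extension property for maps into finite targets, and a finite quotient of $A_t\amalg^{\mathcal{C}}A_s$ that separates the two copies of $gN$ to force $G_t\cap G_s=\{e\}$. Your compactness step (extracting a single $N\unlhd_oG$ from the cover of $\{e\}\times T$ in $\mathcal{G}$) plays the same role as the paper's cover-and-refine argument with the sets $O_t=\{s:H_s\leq U_tH_t\}$.

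What each approach buys: yours is more elementary and makes the role of the hypothesis very transparent, with each axiom witnessed by an explicit embedding problem, and it avoids the transfinite section-lifting machinery. The paper's approach is more unified --- a single lifting argument handles everything at once, and the trivial-intersection condition comes for free since it already holds in the external free product $H$ --- and it yields as a byproduct the stronger ``section lemma'' (any continuous epimorphism onto $G$ from a pro-$\mathcal{C}$ group carrying a continuously indexed family mapping isomorphically onto the $G_t$'s splits compatibly), which the paper isolates explicitly.
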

\begin{proof}
	Let $H$ be the free pro-$\mathcal{C}$ product over the sheaf $(\mathcal{G},T,\pi)$, as defined in Remark \ref{continuously indexed}. Observe that by  Remark \ref{continuously indexed}, $\mathcal{G}$ is indeed a profinite space. Moreover, one immediately checks that the map $\mathcal{G}^2\to \mathcal{G}$, which is defined by $((x,t),(y,t))\to (xy^{-1},t)$, is continuous. In particular, $(\mathcal{G},T,\pi)$ is indeed a sheaf of pro-$\mathcal{C}$ groups. For more convenience denote by $H_t$ the image of $\pi^{-1}(t)=G_t\times \{t\}$ inside $H$. In particular, the canonical map from $\mathcal{G}$ to $H$ maps $\pi^{-1}(t)$ isomorphically onto $H_t$ (see \cite[Proposition 5.1.6 (c)]{ribes2017profinite}). Let $\varphi:H\to G$ be the map induced by the continuous function $\mathcal{G}\to G$ which is defined by $(g,t)\to g$. Observe that $\varphi$ maps $H_t$ isomorphically onto $G_t$. Since $G$ is generated by $\{G_t:t\in T\}$ then $\varphi$ is in fact onto. It is enough to construct a group-theoretic continuous section $\psi:G\to H$ such that for every $t\in T$ $\psi(G_t)=H_t$. Indeed, assume that such a section $\psi$ exists. Since $\varphi\circ \psi=Id_G$ then $\psi$ is injective. On the other hand, since $H$ is generated by $\{H_t:t\in T\}$ then $\psi$ is surjective. We conclude that $\psi$ is in fact an isomorphim.
	
	We will prove a more general claim: Let $H$ be a pro-$\mathcal{C}$ group and let $\{H_t,t\in T\}$ be a family of closed subgroups continuously indexed by $T$. Let $\varphi:H\to G$ be a continuous epimorphism which maps $H_t$ isomorphically onto $G_t$ for every $t\in T$. Then $\varphi$ admits a continuous group-theoretic section $\psi:G\to H$ such that $\psi(G_t)=H_t$ for every $t\in T$.
	
	Let $K=\ker(\varphi)$. Clearly $H_t\cap K=\{e\}$. For every closed normal subgroup $N\unlhd H$ such that $N\leq K$ let $\varphi_N:H/N\to G$ be the induced projection. We claim that there exists a closed normal subgroup $N\leq K$ such that the projection $\varphi_N:H/N\to G$ admits a continuous section $\psi_N:G\to H/N$ which satisfies $\psi_N(G_t)=H_tN/N$ for every $t\in T$, which is minimal in the following sense:
	 
	 let us define the following poset: let $\mathcal{A}=\{(M,\psi_M)\}$ be the set of all pairs consisting of a closed normal subgroup of $G$ which is contained in $K$ and a continuous section $\psi_M:G\to H/M$ such that $\psi_M(G_t)=H_tM/M$ for every $t\in T$. $\mathcal{A}\ne \emptyset$ as it contains $(H/K,\varphi_K^{-1})$. We define a partial order on $\mathcal{A}$ by letting $(M,\psi_M)\geq(L,\psi_L)$ if and only if $M\leq L$ and $\psi_L=\pi_{ML}\circ \psi_M$, where $\pi_{ML}:H/M\to H/L$ is the natural projection. By Zorn's lemma it is enough to show that every chain in $\mathcal{A}$ admits an upper bound. So, let $\{(M_i,\psi_{M_i})\}$ be a chain in $\mathcal{A}$. Set $L=\bigcap M_i$. Clearly $L\unlhd G$ and $L\leq K$. In addition, $H/L\cong\varprojlim (H/M_i,\pi_{M_i,M_j})$. Hence the compatible sections $\varphi_{M_i}$ induce a continuous homomorphsim $\psi_L:G\to H/L$. One checks immediately that $\psi_L$ is a section for $\varphi_L$ and $\psi_L(G_t)=\bigcap_i H_tM_i=H_t\bigcap_i M_i=H_tL$ for all $t\in T$ (see \cite[Proposition 2.1.4 (a)]{ribes2000profinite} for the second equation). Hence, we can choose a maximal element $(N,\psi_N)\in \mathcal{A}$. Such an element is in fact \textit{minimal} in the sense defined above.
	If $N=\{e\}$ we are done. Assume by contradiction that $N\ne \{e\}$. Hence $N$ admits a proper open normal subgroup $M\unlhd_o N$. By basic properties of profinite groups (see \cite[Lemma 1.2.5 (a)]{fried2005field}), there exists an open normal subgroup $L\unlhd_oH$ such that $L\cap N\leq M$. Hence we may replace $M$ by $L\cap N$ and assume that $M\unlhd H$. Let us look at the following embedding problem:
	\begin{equation}\label{finite kernel}
		\begin{gathered}
		\xymatrix@R=14pt{ &&&G\ar[d]^{\psi_N}  \\
			1\ar[r] & N/M\ar[r] &  H/M\ar[r]^{\pi_{MN}} & H/N\ar[r]&1}
		\end{gathered}
	\end{equation}
	
Observe that for every $t\in T$, $H_tM/M$ is mapped isomorphically onto $H_tN/N$. It is enough to show that    \eqref{finite kernel} is solvable. Recall again that for every $t\in T$ $H_t\cap K=\{e\}$. In particular, $H_t\cap N\leq M$. Hence there exists some $U_t\leq _oG$ such that $U_tH_t\cap N\leq M$. In particular, if $s\in T$ satisfies that $H_s\leq U_tH_t$ then $U_tH_s\cap N\leq M$. Since $\{H_t:t\in T\}$ is continuously indexed by $T$, the set $O_t=\{s\in T:H_s\leq U_tH_t\}$ is open in $T$. In other words, the set $\{O_t:t\in T\}$ is an open cover for $T$ and hence admits a finite sub-cover. Let $O_{t_1},...,O_{t_n}$ be such a sub-cover and define $U$ to be an open normal subgroup of $G$ which is contained in $U_{t_1},...,U_{t_n}$, such that $U\cap N\leq M$. Then $H_tU\cap N\leq M$ for all $t\in T$. Let us look at the following diagram:
\begin{equation*}
	\begin{gathered}
		\xymatrix@R=14pt{ &&&G\ar[d]^{\psi_N}  \\
			1\ar[r] & N/M\ar[r] &  H/M\ar[r]^{\pi_{MN}} \ar[d]_{\pi_{M,MU}} & H/N\ar[r] \ar[d]^{\pi_{N,NU}}&1\\
		&&H/MU\ar[r]_{\pi_{MU,NU}}& H/NU\ar[r]&1}
	\end{gathered}
\end{equation*}
\sloppy
$H/MU$ is a finite pro-$\mathcal{C}$ group. In addition $\pi_{N,NU}\circ\psi_N$ sends $G_t$ to $H_tNU/NU$ for every $t\in T$, and $\pi_{MU,NU}$ maps $H_tMU$ isomorphically onto $H_tNU$ since $H_tMU\cap NU\leq MU$ for every $t\in T$. Hence by assumption there exists a solution $\eta:G\to H/MU$ such that $\eta(G_t)=H_tMU/MU$ for every $t\in T$. Since $MU\cap N=M$ and $MUN=NU$, $(H/M,\pi_{MN},\pi_{M,MU})$ is the fiber product (also known as \textit{pullback}) of \begin{equation*}
	\begin{gathered}
		\xymatrix@R=14pt{
		 &   H/N\ar[d]^{\pi_{N,NU}}\\
			H/MU\ar[r]_{\pi_{MU,NU}}& H/NU}
	\end{gathered}
\end{equation*} (see \cite[Exercise 2.10.1]{ribes2000profinite}). The identification of $H/M$ with $H/MU\times_{H/NU}H/N$ is done by the map $hM\to (hMU,hN)$. By definition of the fiber product there exists a continuous homomorphism $\psi_M:G\to H/M$ such that $\pi_{MN}\circ\psi_M=\psi_N$. Since $\varphi_M=\varphi_N\circ\pi_{MN}$ and $\varphi_N\circ\psi_N=Id_G$ one concludes that $\psi_M$ is indeed a section for $\varphi_M$. In addition, let $g\in G_t$. Then $\psi_M(g)=hM$ for the unique $hM\in H/M$ such that $(hMU,hN)=(\eta(g),\psi_N(g))$. In particular, $hM\leq H_tMU\cap H_tN=H_tM$. Hence, $\psi(G_t)\leq H_tM/M$ for every $t\in T$. However, since $H_tM/M$ maps isomorphically onto $G_t$, $\psi(G_t)=H_tM/M$. We contradicted the minimality of $(N,\psi_N)$, hence we are done.
\end{proof}

\begin{rem}
	\begin{enumerate}
		\item As can be shown by the proof of Proposition \ref{condition for being free product}, it is enough to require $G$ to solve embedding problems for which $\{B_t:t\in T\}$ is continuously indexed by $T$.
		\item As can be shown by the proof of Proposition \ref{condition for being free product}, it is enough to require $G$ to finite solve embedding problems for which $\ker(\alpha)$ is a minimal normal subgroup. Indeed, Let $N$ be as in the proof. If $N=\{e\}$ we are done. Otherwise, as was explained in the proof, $N$ contains a proper open subgroup $M$ which is normal in $G$. Let $M$ be maximal with respect to these two properties. Now let $U$ be as in the proof. In particular, $MU\cap N=M$. We want to show that the natural projection $H/MU\to H/NU$ admits a minimal normal kernel. In other words we shall show that $NU/MU$ is a minimal normal subgroup in $H/MU$, or equivalently, that there is no normal subgroup $L\unlhd H$ which lies properly between $MU$ and $NU$. Well, assume that $MU\leq L\leq NU$ and $L\unlhd H$. Then $M\leq L\cap N\leq N$. If $N=L\cap N$ then $N\leq L$ which implies that $L=NU$. Otherwise, $L\cap N=M$ and hence $L=L\cap NU\leq(L\cap N)U=MU$.
		\item In general, this criterion is not necessary. For example, one can look on the sheaf $(\mathcal{G}=C_2\times T,T,\pi)$ with the natural projection $\pi:C_2\times T\to T$. Let $G=\prod_T^{\mathcal{C}} \mathcal{G}$, $B=C_2\times C_2$ and $A=C_2$. All homomorphisms are defined by extending the natural isomorphisms $C_2\to C_2$. In addition, let $S\subseteq T$ be a subset which is not closed and define $\{B_t:t\in T\}$ to be $\{C_2\times \{0\}:t\in S\}\cup \{\{0\}\times C_2:t\notin S\}$. One easily sees that $$(\varphi:G\to A,\alpha:B\to A, \{B_t:t\in T\})$$ is not solvable.
	\end{enumerate}
	
\end{rem}\label{equivalent condition for restricted free product}
If we consider the \textbf{restricted} free pro-$\mathcal{C}$ product, then the condition of Proposition \ref{condition for being free product} is in fact an equivalent condition, as can be shown by the following lemma:
\begin{lem}\label{restricted free product solve}
	Let $G$ be the restricted free pro-$\mathcal{C}$ product of its set of closed subgroups $\{G_i:i\in I\}$. Then $G$ can solve every finite embedding problem 
	\begin{equation}\label{embedding problem for restricted}
		\begin{gathered}
			(\varphi:G\to A,\alpha:B\to A,\{B_i:i\in I\})
		\end{gathered}
	\end{equation} in the category of pro-$\mathcal{C}$ groups.
\end{lem}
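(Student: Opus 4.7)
The plan is to invoke the universal property of the restricted free pro-$\mathcal{C}$ product directly, by building $\psi$ from its restrictions to the factors $G_i$. The key observation that makes everything work is that the epimorphism $\alpha$ restricted to $B_i$ is an isomorphism onto $\varphi(G_i)$, so there is a canonical candidate for $\psi|_{G_i}$, namely
\[
\psi_i \;=\; (\alpha|_{B_i})^{-1}\circ \varphi|_{G_i}\colon G_i\longrightarrow B_i\hookrightarrow B.
\]
Each $\psi_i$ is a continuous homomorphism by construction.

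Next I would verify that the family $\{\psi_i(G_i)\}_{i\in I}=\{B_i\}_{i\in I}$ converges to $1$ in $B$. Since $A$ is finite, $\ker\varphi$ is open in $G$; and because $\{G_i\}_{i\in I}$ converges to $1$ in $G$, we have $G_i\leq\ker\varphi$ for all but finitely many $i\in I$. For such $i$ we get $\varphi(G_i)=\{e\}$, and since $\alpha$ maps $B_i$ isomorphically onto $\varphi(G_i)$, it follows that $B_i=\{e\}$. Thus cofinitely many $B_i$ are trivial, which is precisely convergence to $1$ inside the finite (hence pro-$\mathcal{C}$) group $B$. By the universal property of the restricted free pro-$\mathcal{C}$ product recalled in the paragraph following Remark \ref{continuously indexed}, the collection $\{\psi_i\}$ assembles to a continuous homomorphism $\psi\colon G\to B$ with $\psi|_{G_i}=\psi_i$ for every $i\in I$.

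Finally I would check the two properties required of a solution. By construction $\psi(G_i)=\psi_i(G_i)=B_i$ for every $i$. For the commutativity $\alpha\circ\psi=\varphi$, both sides are continuous homomorphisms out of $G$, and on each factor $G_i$ we have $\alpha\circ\psi_i=\alpha\circ(\alpha|_{B_i})^{-1}\circ\varphi|_{G_i}=\varphi|_{G_i}$. Since $G=\overline{\langle G_i:i\in I\rangle}$, the two continuous homomorphisms $\alpha\circ\psi$ and $\varphi$ agree on a generating set and are therefore equal. Hence $\psi$ solves the embedding problem.

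I do not anticipate a real obstacle here: the only point that requires care is extracting convergence to $1$ of the $B_i$ from the finiteness of $A$ together with convergence to $1$ of the $G_i$, and this is immediate from openness of $\ker\varphi$. Everything else is a formal application of the universal property.
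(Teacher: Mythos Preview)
Your proposal is correct and follows essentially the same argument as the paper: define $\psi_i=(\alpha|_{B_i})^{-1}\circ\varphi|_{G_i}$, use finiteness of $A$ together with convergence of the $G_i$ to see that all but finitely many $B_i$ are trivial, invoke the universal property of the restricted free product to obtain $\psi$, and check $\alpha\circ\psi=\varphi$ by agreement on the generating set $\bigcup_i G_i$. The only cosmetic difference is that the paper phrases the convergence step directly in terms of the finiteness of $A$ rather than the openness of $\ker\varphi$, but this is the same observation.
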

\begin{proof}
	Let \eqref{embedding problem for restricted} be a finite embedding problem for $(G,\{G_i\in I\})$ in the category of pro-$\mathcal{C}$ groups. Since $A$ is finite there is some finite subset $J\subseteq I$ such that for every $i\in I\setminus J$, $\varphi(G_i)=\{e\}$. In particular, for every $i\in I\setminus J$, $B_i=\{e\}$. Define for every $i\in I$ a map $\psi_i:G_i\to B$ by taking $\psi_i=\alpha_i^{-1}\circ \varphi|_{G_i}$. Here $\alpha_i^{-1}$ denotes the inverse of the isomorphism $\alpha_i=\alpha|_{B_i}:B_i\to \varphi(G_i)$. Then we have a converging to 1 map $D\to B$ whose restriction to every $G_i$ is a continuous homomorphism. By definition of the free pro-$\mathcal{C}$ product, there exists a continuous homomorphism $\psi:G\to B$ whose restriction to every $G_i,i\in I$ equals to $\psi_i$. In addition, since $\varphi$ and $\alpha\circ\psi$ are two continuous homomorphisms which identify on $D$ they identify on the abstract subgroup generated by $D$ and hence on $G$.
\end{proof}
Restricted free pro-$p$ products also satisfy the following property, which is a generalization of \cite[Exercise 9.1.22]{ribes2000profinite} for free pro-$p$ products of finitely many groups:
\begin{lem}\label{free product of conjugations}
	Let $G=\coprod_I^{r,p}G_i$ be a restricted free pro-$p$ product and let $g_i,i\in I$ be elements in $G$. Then $G=\coprod_I^{r,p}g_iG_ig_i^{-1}$.
\end{lem}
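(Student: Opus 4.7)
My plan is to apply Proposition \ref{condition for being free product} to the family $\{g_iG_ig_i^{-1}\}_{i\in I}$, using the strengthened form given in part (2) of the remark following that proposition (which reduces solvability of all finite embedding problems to those whose kernel is a minimal normal subgroup of $B$). Accordingly, I will check three things: that the family is continuously indexed by $I\cup\{*\}$ (equivalently, converges to $1$), that it topologically generates $G$, and that every finite embedding problem for $(G,\{g_iG_ig_i^{-1}\})$ with $\ker(\alpha)$ a minimal normal subgroup of $B$ is solvable.

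Convergence will follow immediately from the corresponding property of $\{G_i\}$: given $U\leq_oG$, the normal core $N=\bigcap_{g\in G}gUg^{-1}$ is a finite intersection and hence open and normal, cofinitely many $G_i$ lie in $N$, and normality then gives $g_iG_ig_i^{-1}\subseteq N\subseteq U$ for those $i$. For topological generation I will pass to the Frattini quotient $G/\Phi(G)$ with $\Phi(G)=\overline{G^p[G,G]}$: since this quotient is elementary abelian, conjugation acts trivially, and the images of $g_iG_ig_i^{-1}$ and $G_i$ coincide there. As $\{G_i\}$ topologically generates $G$ it generates $G/\Phi(G)$, hence so do the conjugates in the quotient, and the Burnside basis theorem for pro-$p$ groups then lifts this to topological generation of $G$.

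The main step will be the embedding problem. Given $(\varphi:G\to A,\alpha:B\to A,\{B_i\})$ a finite embedding problem for $(G,\{g_iG_ig_i^{-1}\})$ with $K=\ker(\alpha)$ a minimal normal subgroup of the finite $p$-group $B$, I will use that $K$ is automatically elementary abelian and central in $B$ (standard facts about $p$-groups). Then I choose lifts $b_i\in B$ with $\alpha(b_i)=\varphi(g_i)$ and set $C_i:=b_i^{-1}B_ib_i$; since $\alpha$ maps $C_i$ isomorphically onto $\varphi(G_i)$, the triple $(\varphi,\alpha,\{C_i\})$ is a finite embedding problem for $(G,\{G_i\})$, solvable by Lemma \ref{restricted free product solve}. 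Calling the solution $\tilde\psi:G\to B$ and writing $\tilde\psi(g_i)=b_ik_i$ with $k_i\in K$ (possible since $\alpha\tilde\psi(g_i)=\varphi(g_i)=\alpha(b_i)$), the centrality of $K$ will yield
\[
\tilde\psi(g_iG_ig_i^{-1})=\tilde\psi(g_i)C_i\tilde\psi(g_i)^{-1}=b_ik_iC_ik_i^{-1}b_i^{-1}=b_iC_ib_i^{-1}=B_i,
\]
showing that $\tilde\psi$ already solves the original problem.

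The main obstacle is precisely this final computation: a naive attempt to solve a general (not minimal-kernel) embedding problem by the same device yields only $\tilde\psi(g_iG_ig_i^{-1})=k_iB_ik_i^{-1}$, a conjugate of $B_i$ by an element $k_i\in K$, and there is no canonical way to correct it to $B_i$ itself. The reduction to minimal normal $K$ is therefore essential, because it forces $K$ to be central and thereby kills the conjugation discrepancy; this is the crucial use of the strengthened embedding problem criterion.
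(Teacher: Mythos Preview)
Your proof is correct, but it takes a genuinely different route from the paper's. The paper argues directly: it uses the universal property of $G=\coprod_I^{r,p}G_i$ to extend the conjugation maps $G_i\to G$, $g\mapsto g_ig g_i^{-1}$, to a continuous endomorphism $\varphi:G\to G$; the Frattini argument (which you also use) shows $\varphi$ is onto, and then the paper builds a one-sided inverse $\psi:G\to G$ by the same device, using preimages $x_i$ of the $g_i$ and the maps $g\mapsto x_i^{-1}gx_i$. From $\varphi\circ\psi=\mathrm{Id}_G$ one concludes that $\varphi$ is an isomorphism carrying $G_i$ onto $g_iG_ig_i^{-1}$, which immediately transports the free-product structure. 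Your argument instead feeds the conjugated family back into the embedding-problem criterion of Proposition~\ref{condition for being free product}, and the point of interest is your use of the minimal-normal-kernel reduction: for a finite $p$-group the minimal normal kernel is central, which is exactly what kills the conjugation discrepancy $\tilde\psi(g_i)=b_ik_i$. The paper's approach is more elementary and self-contained (it does not invoke Proposition~\ref{condition for being free product} at all, and yields an explicit isomorphism), whereas your approach is a neat illustration of why the minimal-kernel strengthening in the remark is genuinely useful, and it would adapt more readily to situations where one only has the family of subgroups rather than an obvious candidate isomorphism.
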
 
\begin{proof}
	First we show that $\{g_iG_ig_i^{-1}:i\in I\}$ is a set of closed subgroups converging to 1. Let $U\leq_o G$. Choose some open normal subgroup $V$ of $G$ such that $V\leq U$. Since $\{G_i:i\in I\}$ is a set of closed subgroups converging to 1 then there exists a finite subset $J\subset I$ such that $G_i\leq V$ for all $i\in I\setminus J$. Since $V$ is normal, $g_iG_ig_i^{-1}\leq V$ for all $i\in I\setminus V$, so we are done. Now we show that $\bigcup_{i\in I}g_iG_ig_i^{-1}$ generates $G$. Recall that for every profinite group, a subset $X\subset G$ generates $G$ if and only if $X\Phi(G)$ generates $G/\Phi(G)$ where $\Phi(G)$ is the Frattini subgroup of $G$ (see \cite[Corollary 2.8.5]{ribes2000profinite}). Moreover, if $G$ is a pro-$p$ group then $G/\Phi(G)$ is an abelian group (see \cite[Lemma 2.8.7]{ribes2000profinite}). Hence, $(\bigcup_{i\in I}g_iG_ig_i^{-1})\Phi(G)/\Phi(G)=(\bigcup_{i\in I}G_i)\Phi(G)/\Phi(G)=G/\Phi(G)$ as $G$ is generated by $\bigcup_{i\in I}G_i$. So we are done. 

Now, let us look at the set of homomorphisms $\varphi_i:G_i\to G$ which are defined by $\varphi_i(g)=g_igg_i^{-1}$. Since $\{g_iG_ig_i^{-1}:i\in I\}$ is a set of closed subgroups converging to 1 and $G$ is the restricted free pro-$p$ product of $\{G_i\}_{i\in I}$, the set of homomorphisms $\{\varphi_i:G_i\to G\}$ induces a homomorphism $\varphi:G\to G$. Since $G$ is generated by $\bigcup_{i\in I}g_iG_ig_i^{-1}$ then $\varphi$ is in fact an epimorphism. We wish to construct a section $\psi:G\to G$. Since $\varphi$ is onto, for every $i\in I$ there exists some $x_i\in G_i$ such that $\varphi(x_i)=g_i$. Let $H_i=x_i^{-1}G_ix_i$. The set $\{H_i:i\in I\}$ is converging to 1, and $\bigcup_{i\in I}H_i$ generates $G$ by the same arguments. Define maps $\psi_i:G_i\to G$ by $\psi_i(g)=x_i^{-1}gx_i$. Then the set of homomorphisms $\{\psi_i:G_i\to G:i\in I\}$ is converging to 1 and thus induces a homomorphism $\psi:G\to G$. In fact since $\bigcup_{i\in I}H_i$ generates $G$, $\psi:G\to G$ is an epimorphism. Observe that for every $i\in I$ and $g\in G_i$, $\varphi\circ\psi(g)=g$. Thus $\varphi\circ \psi:G\to G$ is a homomorphism extends the set of homomorphisms $\operatorname{Id}_{G_i}:G_i\to G_i$. By the uniqueness in the universal property we get that $\varphi\circ \psi=Id_G$. Thus $\psi$ is injective and surjective which implies that $\psi$, and hence also $\varphi$, are isomorphisms. Since $\varphi$ maps $G_i$ isomorphically onto $g_iG_ig_i^{-1}$ one easily concludes that $G=\coprod_{i\in I}g_iG_ig_i^{-1}$.

\end{proof}
\begin{cor}\label{free product for every coseds}
In \cite[Corollary 7.3.2]{ribes2017profinite} it has been proven that for every open subgroup of a restricted free pro-$\mathcal{C}$ product $U\leq \coprod_I^{r,\mathcal{C}}G_i$ \textbf{there exist} sets $D_i$ of double coset representatives for $U\backslash G/G_i$ which contain 1, such that the family of inclusions $$\{U\cap xG_ix^{-1}|i\in I,x\in D_i\}$$ converges to 1 and $U$ is the restricted free pro-$\mathcal{C}$ product $$U=\left[\coprod_{i\in I,x\in D_i}^{r,\mathcal{C}}(U\cap xG_ix^{-1}) \right]\amalg^{\mathcal{C}} F$$ where $F$ is a free pro-$\mathcal{C}$ group. By Lemma \ref{free product of conjugations} we can conclude that if $\mathcal{C}$ is the variety of finite $p$-groups, then \textbf{for every} sets $D_i$ of double coset representatives for $U\backslash G/G_i$, the family of inclusions $$\{U\cap xG_ix^{-1}|i\in I,x\in D_i\}$$ converges to 1 and $U$ is the restricted free pro-$p$ product $$U=\left[\coprod_{i\in I,x\in D_i}^{r,p}(U\cap xG_ix^{-1}) \right]\amalg^{p} F$$ where $F$ is a free pro-$p$ group. Indeed, assume that $x,y$ are two representatives of the same double coset $UzG_i$. Then there exist $u\in U,g\in G_i$ such that $x=uyg$. Hence $U\cap xG_ix^{-1}=U\cap uygG_ig^{-1}y^{-1}u^{-1}=u(U\cap yG_iy^{-1})u^{-1}$, and by Lemma \ref{free product of conjugations} we are done.
\end{cor}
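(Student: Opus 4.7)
The plan is to bootstrap from \cite[Corollary 7.3.2]{ribes2017profinite} using Lemma \ref{free product of conjugations}. First, I would invoke the existence statement of \cite[Corollary 7.3.2]{ribes2017profinite} to fix one privileged system of double coset representatives $D_i^0 \subseteq G$, each containing $1$, for which the family $\{U\cap xG_ix^{-1}:i\in I,x\in D_i^0\}$ converges to $1$ and
$$U=\left[\coprod_{i\in I,x\in D_i^0}^{r,p}(U\cap xG_ix^{-1})\right]\amalg^{p} F$$
for some free pro-$p$ group $F$. The goal is then to transfer this decomposition to an arbitrary system of double coset representatives $\{D_i\}_{i\in I}$.

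Given such an arbitrary system $D_i$, for each $i\in I$ and each double coset $\tau\in U\backslash G/G_i$ I would pick the two representatives $x_\tau\in D_i^0$ and $y_\tau\in D_i$ of $\tau$, write $x_\tau=u_\tau y_\tau g_\tau$ with $u_\tau\in U$ and $g_\tau\in G_i$, and carry out the one-line computation already displayed in the corollary's statement, namely
$$U\cap x_\tau G_i x_\tau^{-1}= U\cap u_\tau y_\tau g_\tau G_i g_\tau^{-1} y_\tau^{-1} u_\tau^{-1}=u_\tau(U\cap y_\tau G_i y_\tau^{-1})u_\tau^{-1}.$$
Thus every subgroup of the new family is the inner $U$-conjugate, by some $u_\tau^{-1}\in U$, of the corresponding subgroup of the privileged family.

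The last step is to invoke Lemma \ref{free product of conjugations} inside $U$, applied to the restricted free pro-$p$ decomposition with respect to $D_i^0$: conjugating each factor $U\cap x_\tau G_i x_\tau^{-1}$ by $u_\tau^{-1}$ while keeping the extra factor $F$ fixed (i.e.\ conjugating it by the identity) produces exactly the family $\{U\cap yG_iy^{-1}:i\in I,y\in D_i\}\cup\{F\}$. The lemma then delivers simultaneously that this new family converges to $1$ inside $U$ (and hence inside $G$) and that $U$ is its restricted free pro-$p$ product, which is the desired conclusion.

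There is essentially no conceptual obstacle here; the only point requiring minor care is that Lemma \ref{free product of conjugations} is phrased for a restricted free pro-$p$ product indexed by a discrete set without an additional free pro-$p$ factor, while our decomposition has the extra summand $F$. This is harmless: one may either adjoin $F$ to the indexing set as a single factor conjugated by the identity, or realise $F$ itself as a restricted free pro-$p$ product of copies of $\Z_p$ and apply the lemma to the combined indexing system. Either way, the verbatim argument of Lemma \ref{free product of conjugations} goes through.
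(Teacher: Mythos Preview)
Your proposal is correct and follows exactly the approach sketched in the paper: relate the factors for an arbitrary system $D_i$ to those for the privileged system $D_i^0$ via inner $U$-conjugation, then invoke Lemma~\ref{free product of conjugations}. Your explicit handling of the free factor $F$ (by adjoining it as an extra index conjugated by the identity) is a detail the paper leaves implicit but which indeed causes no difficulty.
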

Now we are ready to prove the main theorem.
\begin{thm}[Main Theorem]
	Let $G=\coprod_{i\in I}^{r,p}G_i$ be a restricted free pro-$p$ product and $H\leq G$ a closed subgroup. Assume that:
\begin{enumerate}
	\item The space $H\backslash G/G_i$ is finite for every $i\in I$, and
	\item There exist sets $D_i$ of double coset representatives for $H\backslash G/G_i$ such that $H$ is generated by the subgroups $\{H\cap xG_ix^{-1}:i\in I,x\in D_i\}$.
\end{enumerate} 
Then the set $\{H\cap xG_ix^{-1}:i\in I,x\in D_i\}$ of subgroups of $H$ converges to 1 and  $H=\coprod_{i\in I,x\in D_i}^{r,p}H\cap xG_ix^{-1}$.
\end{thm}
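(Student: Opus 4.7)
The plan is to verify the hypotheses of Proposition~\ref{condition for being free product} for the family $\{H\cap xG_ix^{-1}\}$ inside $H$: namely that this family converges to $1$, so that it is continuously indexed as a restricted family, and that every finite embedding problem for $(H,\{H\cap xG_ix^{-1}\})$ in the category of pro-$p$ groups is solvable. Generation is given by hypothesis~(2).

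Convergence to $1$ is the easier half: given an open $U\leq_o H$, choose an open normal $N\unlhd_o G$ with $N\cap H\leq U$. Since $\{G_i\}$ converges to $1$ in $G$, only finitely many $i\in J\subseteq I$ satisfy $G_i\not\leq N$, and for $i\notin J$ normality of $N$ forces $xG_ix^{-1}\leq N$ for every $x\in G$, hence $H\cap xG_ix^{-1}\leq U$. For $i\in J$, hypothesis~(1) makes $D_i$ finite, so only finitely many pairs $(i,x)$ remain.

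For the embedding-problem criterion, given a finite embedding problem $(\varphi:H\to A,\alpha:B\to A,\{B_{i,x}\})$, I would first invoke the second item of the Remark following Proposition~\ref{condition for being free product} to reduce to the case that $\ker\alpha$ is minimal normal in $B$; since $B$ is a finite $p$-group this forces $\ker\alpha$ to be central and elementary abelian, a fact that will be crucial at the end. I would then encode the problem as a finite embedding problem for $G$ and solve it using Lemma~\ref{restricted free product solve}. Concretely, pick an open normal $N\unlhd_o G$ with $N\cap H\leq\ker\varphi$ and, after a compactness argument further shrinking $N$, such that for each relevant $i$ and $x\in D_i$, the image $\bar H_{i,x}$ of $H\cap xG_ix^{-1}$ in $\bar G=G/N$ coincides with $\bar H\cap\bar x\bar G_i\bar x^{-1}$, where $\bar H=HN/N$. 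Setting $Y=\bar H\backslash\bar G$ and fixing a section $s:Y\to\bar G$ with $s(\bar H)=e$ chosen compatibly with the $\bar G_i$-orbit decomposition (each orbit carrying a representative coming from some $x\in D_i$), form the finite $p$-group wreath products $A\wr Y=A^Y\rtimes\bar G$ and $B\wr Y=B^Y\rtimes\bar G$, equipped with the coordinatewise epimorphism $\tilde\alpha$. The induced-representation homomorphism $\Phi:G\twoheadrightarrow\bar G\to A\wr Y$, $g\mapsto((\bar\varphi(\kappa(y,g)))_y,g)$ with $\kappa(y,g)=s(y)g\,s(yg)^{-1}\in\bar H$, is then lifted on each $\bar G_i$ to a section $\hat\Phi_i:\bar G_i\to B\wr Y$ by replacing every value $\bar\varphi(\kappa(y,g))$ by its unique $\alpha|_{B_{i,x_y}}$-preimage in $B_{i,x_y}$, where $x_y\in D_i$ labels the $\bar G_i$-orbit of $y$. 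Setting $B_i'=\hat\Phi_i(\bar G_i)$, one checks that $\tilde\alpha|_{B_i'}$ is an isomorphism onto $\Phi(G_i)$.

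Applying Lemma~\ref{restricted free product solve} to $G$ produces $\Psi:G\to B\wr Y$ with $\tilde\alpha\circ\Psi=\Phi$ and $\Psi(G_i)=B_i'$. I would then define $\psi:H\to B$ by $\psi(h)=b_{\bar H}$ where $\Psi(h)=((b_y)_y,\bar h)$; because $\bar H$ fixes the distinguished coset $\bar H\in Y$, the wreath multiplication collapses at that coordinate and $\psi$ is a continuous homomorphism satisfying $\alpha\circ\psi=\varphi$. For $h\in H_{i,x}$, writing $h=xg_ix^{-1}$ with $g_i=x^{-1}hx\in G_i$ and expanding $\Psi(h)=\Psi(x)\Psi(g_i)\Psi(x)^{-1}$ in the wreath product yields $\psi(h)=b_x(\bar H)\,\psi_{i,x}(h)\,b_x(\bar H)^{-1}$ where $\psi_{i,x}=(\alpha|_{B_{i,x}})^{-1}\circ\varphi|_{H_{i,x}}$, and a direct calculation using $s(\bar H\bar x)=\bar x$ shows $b_x(\bar H)\in\ker\alpha$. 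The main obstacle is the careful setup of the wreath-product embedding problem --- choosing $N$ and the section $s$ so that the $B_i'$ really map isomorphically onto $\Phi(G_i)$ --- together with the final step: the centrality of $\ker\alpha$, secured by the reduction to minimal kernel, is precisely what collapses the conjugation cocycle and guarantees the exact equality $\psi(H_{i,x})=B_{i,x}$ rather than merely conjugacy.
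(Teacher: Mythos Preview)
Your overall strategy---verify convergence, then solve finite embedding problems for $(H,\{H\cap xG_ix^{-1}\})$ by manufacturing an embedding problem for $G$ itself via a wreath-product (Kaloujnine--Krasner) construction and invoking Lemma~\ref{restricted free product solve}---is sound and genuinely different from the paper's argument. The paper never touches wreath products: instead it passes to a carefully chosen \emph{open} subgroup $K$ with $H\leq K\leq_o G$, applies the open-subgroup Kurosh theorem (Corollary~\ref{free product for every coseds}) to decompose $K$ as a restricted free pro-$p$ product, solves the embedding problem for $K$ via Lemma~\ref{restricted free product solve} together with projectivity of the free factor $F$, and then restricts the solution to $H$. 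The delicate work there is choosing $K$ so that (i) the $D_i$ remain double-coset representatives for $K\backslash G/G_i$ when $i\in J$, and (ii) $\varphi'(K\cap xG_ix^{-1})=\varphi(H\cap xG_ix^{-1})$. Your route trades this dependence on the open-subgroup theorem for an explicit induction construction and the central-kernel reduction; the paper's route is more elementary once Corollary~\ref{free product for every coseds} is in hand, and in particular does not need the reduction to minimal kernel.

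There is, however, a real gap in your wreath-product step as written. A single section $s:Y\to\bar G$ cannot in general be ``compatible with the $\bar G_i$-orbit decomposition'' for all $i\in J$ at once in the sense you actually need. For $\bar\varphi(\kappa(y,g))$ to have a preimage in $B_{i,x_y}$ you need $\kappa(y,g)\in\bar H\cap\bar x_y\bar G_i\bar x_y^{-1}$, and that forces $s(y)\in\bar x_y\bar G_i$ for \emph{every} $y$ in the orbit, not just the distinguished representative. When $|J|\geq 2$ the resulting constraints $s(y)\in\bigcap_{i\in J}\bar x_{i,y}\bar G_i$ are typically inconsistent. The fix is standard but not optional: for each $i$ choose a section $s_i$ adapted to the $\bar G_i$-orbits (so $s_i(y)\in\bar x_y\bar G_i$ throughout), build $\hat\Phi_i'$ from the cocycle $\kappa_i$ based on $s_i$, and then conjugate by a lift $\tilde c_i\in B^Y$ of $((\bar\varphi(s_i(y)s(y)^{-1}))_y,e)$ to obtain a genuine lift of $\Phi_s|_{\bar G_i}$. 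Unwinding $\psi(h)$ for $h\in H_{i,x}$ then gives $\psi(h)=c\,\psi_{i,x}(h)\,c^{-1}$ with $c=\beta^x_{\bar H}\cdot\tilde c_i(\bar H\bar x)^{-1}$, and since both factors have the same $\alpha$-image $\bar\varphi(\bar x\,s(\bar H\bar x)^{-1})$ one gets $c\in\ker\alpha$; centrality then finishes as you intend. So your final insight about the central kernel is exactly right, but the conjugation bookkeeping that feeds into it needs to be set up with per-$i$ sections rather than a single global one.
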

\begin{proof}
	Let $G=\coprod_I^{r,p}G_i$ be a restricted free pro-$p$ product and $H\leq G$ be a subgroup as in the theorem. First we claim that the set ${\mathcal{X}=\{H\cap xG_ix^{-1}:i\in I,x\in D_i\}}$ converges to 1 in $H$. Indeed, let $U$ be an open normal subgroup of $H$, then there exists some $V\unlhd_o G$ such that $V\cap H\leq U$. Since $\{G_i:i\in I\}$ converges to 1 in $G$, there exists a finite subset $J\subset I$ such that  $G_i\leq V$ for all $i\in I\setminus J$. Hence for all $i\in I\setminus J, x\in D_i$, $xG_ix^{-1}\leq V$, and thus for all $i\in I\setminus J, x\in D_i$, $H\cap xG_ix^{-1}\leq H\cap  V\leq U$. Since $D_i$ is finite for every $i\in I$, we conclude that the set of all groups in $\mathcal{X}$ which are not contained in $U$ is finite.
	
	Now let $$(\varphi:H\to A,\alpha:B\to A,\{B_{i,x}:i\in I,x\in D_i\})$$ be a finite embedding problem for $(H,\mathcal{X})$ in the category of pro-$p$ groups. Since $\ker\varphi$ is open in $H$, there is some open normal subgroup $O\unlhd_oG$ such that $O\cap H\leq \ker\varphi$. By the convergence to 1 property of $\{G_i:i\in I\}$ together with the normality of $O$, there is a finite subset $J\subset I$ such that $xG_ix^{-1}\leq O$ for all $i\in I\setminus J, x\in D_i$. Let $i\in J$, $x\ne y$ in $D_i$. Then $x\notin HyG_i$. Hence there exists an open subgroup $H\leq U_{i,x,y}$ such that $x\notin U_{i,x,y}yG_i$. Taking the intersection $U=\bigcap_{i\in J,x\ne y\in D_i}U_{i,x,y}$ we get that for every $H\leq V\leq_o U$ and for all $i\in J$, $D_i$ is also a set of representatives for $V\backslash G/G_i$. Indeed, by the choice of $U$, for all $i\in J,x\ne y\in D_i$ $x\notin VyG_i$. Moreover, for all $g\in G$ and $i\in I$, $g\in HxG_i\leq VxG_i$ for some $x\in D_i$.     
	Put $L=HO$ and extend $\varphi$ to a homomorphism $\varphi':L\to H$ by letting $\varphi'(g)=e$ for every $g\in O$. In particular, for every $i\in I\setminus J$ and every $g\in G, g'\in G_i$ such that $gg'g^-1\in L$,  $\varphi'(gg'g^{-1})=\{e\}$. 
	
	Let $i\in J,x\in D_i$. Since $H\cap xG_ix^{-1}\leq (H\cap xG_ix^{-1})O$ there exists some open subgroup $H\leq V_{i,x}\leq L$ such that $V_{i,x}\cap xG_ix^{-1}\leq (H\cap xG_ix^{-1})O$. In particular, for every $H\leq V\leq V_{i,x}$, $$\varphi(H\cap xG_ix^{-1})=\varphi'(H\cap xG_ix^{-1})\leq \varphi'(V\cap xG_ix^{-1})\leq \varphi'(V_{i,x}\cap xG_ix^{-1})$$ $$\leq \varphi'(H\cap xG_ix^{-1}O)=\varphi'(H\cap xG_ix^{-1})$$ and hence $\varphi'(V\cap xG_ix^{-1})=\varphi(H\cap xG_ix^{-1})$.
	
	Let $H\leq K\leq \bigcap_{i\in J, x\in D_i} V_{i,x}$ be an open subgroup of $G$ containing $H$ such that  $D_i$ is a set of representatives for $K\backslash G/G_i$ for all $i\in J$. Such $K$ can be chosen by the previous paragraph.
	
	By Corollary \ref{free product for every coseds} there exist sets $E_i$ of double coset representatives for every $i\in I$, such that $E_i=D_i$ for every $i\in J$ and $K$ is the restricted free pro-$p$ product: $$K=[\coprod_{i\in I}^{r,p}\coprod_{x\in E_i}^{p}(K\cap xG_ix^{-1}) ]\amalg^p F$$ for some free pro-$p$ subgroup $F\leq K$.

	Consider the restriction of $\varphi'$ to $K$. Observe that $\varphi'(K\cap xG_ix^{-1})=\{e\} $ for every $i\in I\setminus J$ and $x\in E_i$. In addition, for every $i\in j$ and $x\in E_i=D_i$, $\varphi'(K\cap xG_ix^{-1})=\varphi(H\cap xG_ix^{-1})$. For every $i\in I, x\in E_i$, define a subgroup $C_{i,x}\leq B$ as follows: if $i\in I\setminus J$ then $C_{i,x}=\{e\}$ for every $x\in E_i$. Otherwise, $C_{i,x}=B_{i,x}$. We get that $(\varphi':K\to A,\alpha:B\to A,\{C_{i,x}:i\in I,x\in E_i\})$ is a finite embedding problem in the category of pro-$p$ groups for $$(K'=\coprod_{i=1}^{r,p}\coprod_{x\in E_i}^p(K\cap xG_ix^{-1}),\{K\cap xG_ix^{-1}:i\in I,x\in E_i\})$$ By Lemma \ref{restricted free product solve} there exists a homomorphism $\psi':K'\to B$ such that $\alpha\circ\psi'=\varphi'$ on $K'$, and for every $i\in I,x\in E_i$, $\psi'(K\cap xG_ix^{-1})=C_{i,x}$. Since $F$ is a free pro-$p$ group, there exists a homomorphism $\psi'':F\to B$ such that $\psi''\circ \alpha=\varphi'$ on $F$. By the uniquen extension property of free pro-$p$ products, there exists a homomorphism $\psi:K\to B$ extending $\psi'$ and $\psi''$ such that $\alpha\circ\psi=\varphi'$ on $K$. Let $\psi|_H$ be the restriction of $\psi$ to $H$. Then $\alpha\circ\psi|_H=\varphi$. Now let $i\in I$ and $x\in D_i$. First assume that $i\notin J$. Then for every $x\in D_i$ $\varphi(H\cap xG_ix^{-1})=\{e\}$. Recall that for every $g\in G, g'\in G_i$ $\varphi'(gg'g^{-1})=\{e\}$ whenever it is defined. Let $y\in E_i$ be such that $x\in KyG_i$. By definition, $C_{i,y}=\{e\}$ and $\psi'(K\cap yG_iy^{-1})=\{e\}$. Let $k\in K,g\in G_i$ be such that $x=kyg$. Then $$H\cap xG_ix^{-1}\leq K\cap xG_ix^{-1}=K\cap kygG_ig^{-1}y^{-1}k^{-1}=k(K\cap yG_iy^{-1})k^{-1}$$. Since $$\psi(k(K\cap yG_iy^{-1})k^{-1})=\psi(k)\psi(K\cap yG_iy^{-1})\psi(k)^{-1}=\{e\}$$ we conclude that $\psi|_H(H\cap xG_ix^{-1})=\{e\}$ as required.
	
	Now assume that $i\in J$. Let $x\in D_i$. Then $x\in E_i$ and $\psi(K\cap xD_ix^{-1})=C_{i,x}=B_{i,x}$. Hence, $\psi|_H(H\cap xG_ix^{-1})\leq B_{i,x}$. However, since $\alpha$ maps $B_{i,x}$ isomorphically onto $\varphi(H\cap xG_ix^{-1})$ we conclude that  $\psi|_H(H\cap xG_ix^{-1})= B_{i,x}$. In conclusion, by Proposition \ref{condition for being free product} we are done. 
\end{proof}

	\bibliographystyle{plain}
	
\end{document}